\documentclass[conference]{IEEEtran}
\IEEEoverridecommandlockouts
% The preceding line is only needed to identify funding in the first footnote. If that is unneeded, please comment it out.
%Template version as of 6/27/2024

\usepackage{cite}
\usepackage{amsmath,amssymb,amsfonts}
\usepackage{amsthm}
\usepackage{algorithmic}
\usepackage{graphicx}
\usepackage{textcomp}
\usepackage{xcolor}

\usepackage{hyperref}
\usepackage{booktabs}

% Define theorem environments
\newtheorem{theorem}{Theorem}[section]
\newtheorem{lemma}[theorem]{Lemma}

\def\BibTeX{{\rm B\kern-.05em{\sc i\kern-.025em b}\kern-.08em
    T\kern-.1667em\lower.7ex\hbox{E}\kern-.125emX}}
\begin{document}

\title{Spectral Analysis of the \\ Weighted Frobenius Objective}%*\\
% {\footnotesize \textsuperscript{*}Note: Sub-titles are not captured for https://ieeexplore.ieee.org  and
% should not be used}
% \thanks{Identify applicable funding agency here. If none, delete this.}
% }

\author{\IEEEauthorblockN{Vladislav Trifonov}
\IEEEauthorblockA{\textit{AI4S Center, Sberbank of Russia} \\
\textit{CAI, Skoltech}\\
Moscow, Russian Federation \\
vladislav.trifonov@skoltech.ru}
\and
% \IEEEauthorblockN{Oleg Iliev}
% % \IEEEauthorblockA{\textit{Flow and Material Simulation dept.} \\
% \IEEEauthorblockA{\textit{Fraunhofer ITWM} \\
% % \textit{Fraunhofer Institute for Industrial Mathematics ITWM}\\
% Kaiserslautern, Germany \\
% oleg.iliev@itwm.fraunhofer.de}
% \and
\IEEEauthorblockN{Ivan Oseledets}
\IEEEauthorblockA{\textit{AIRI} \\
\textit{CAI, Skoltech} \\
% \textit{name of organization (of Aff.)}\\
Moscow, Russia \\
oseledets@airi.net}
\and
\IEEEauthorblockN{Ekaterina Muravleva}
\IEEEauthorblockA{\textit{AI4S Center, Sberbank of Russia} \\
\textit{CAI, Skoltech}\\
Moscow, Russian Federation \\
e.muravleva@skoltech.ru}
}

\maketitle

\begin{abstract}
    We analyze a weighted Frobenius loss for approximating symmetric positive definite matrices in the context of preconditioning iterative solvers. Unlike the standard Frobenius norm, the weighted loss penalizes error components associated with small eigenvalues of the system matrix more strongly. Our analysis reveals that each eigenmode is scaled by the corresponding square of its eigenvalue, and that, under a fixed error budget, the loss is minimized only when the error is confined to the direction of the largest eigenvalue. This provides a rigorous explanation of why minimizing the weighted loss naturally suppresses low-frequency components, which can be a desirable strategy for the conjugate gradient method. The analysis is independent of the specific approximation scheme or sparsity pattern, and applies equally to incomplete factorizations, algebraic updates, and learning-based constructions. Numerical experiments confirm the predictions of the theory, including an illustration where sparse factors are trained by a direct gradient updates to IC(0) factor entries, i.e., no trained neural network model is used.
\end{abstract}

\begin{IEEEkeywords}
    Preconditioning; Conjugate Gradient Method; Sparse Matrix Approximations; Spectral Properties
\end{IEEEkeywords}

\section{Motivation and background}

We are concerned with the construction of structured approximations to large sparse matrices in the context of preconditioning. Preconditioners of this type are widely used to accelerate iterative methods for solving linear systems, especially those arising from discretizations of partial differential equations. While our focus is on preconditioners, the considerations developed here may also be relevant for other approximation problems where a structured approximation of a matrix is required.

A common procedure is to select a structured matrix $P$ that minimizes the Frobenius functional~[1]
\begin{equation}
    \label{eq:unweighted-frobenius}
\|P - A\|_F^2,
\end{equation}
with $A$ the given matrix. This classical choice is orthogonally invariant and computationally convenient. At the same time, this functional assigns equal importance to all components of the error, without regard to the spectral properties of $A$.

In this paper we investigate the weighted Frobenius functional
\begin{equation}
    \label{eq:weighted-frobenius}
\|(P - A)A^{-1}\|_F^2,
\end{equation}
where $A$ is assumed symmetric positive definite and $P$ is again restricted to a structured class, e.g., sparse factors with a prescribed sparsity pattern of incomplete Cholesky factorizations with zero level fill-in (IC(0)). Since $A$ is SPD, the eigendecomposition $A = Q \Lambda Q^\top$ exists. Expressing the error in this eigenbasis allows a precise analysis of how the two functionals differ.

The assumption that $A$ is symmetric positive definite is not restrictive in many applications. Such matrices arise naturally as stiffness matrices from finite element or finite difference discretizations of elliptic partial differential equations, as covariance matrices in statistics, and as normal equations in least-squares problems. In the PDE setting, the smallest eigenvalues are typically associated with smooth, slowly varying error modes, while the largest eigenvalues correspond to oscillatory components introduced by the discretization grid. The distinction between these modes is central in numerical analysis, as different iterative methods and preconditioners affect them in different ways.

Approximations $P$ with prescribed sparsity or structure are frequently used as preconditioners to accelerate iterative solvers for these systems. Classical examples include IC(0), which maintain the sparsity of the original matrix, and algebraic approximations designed to capture the dominant spectral behavior of $A$ while remaining computationally efficient. In such contexts, the choice of functional used to determine $P$ directly influences which parts of the spectrum are emphasized by the approximation.

\section{Spectral properties of Frobenius functionals}

We now analyze the difference between the unweighted functional~\eqref{eq:unweighted-frobenius}
and the weighted version~\eqref{eq:weighted-frobenius}, where $A$ is symmetric positive definite and $P$ is a structured approximation. The following lemma provides a spectral decomposition of both functionals.

\begin{lemma}[Spectral decomposition]\label{lem:spectral-objectives}
Let $A = Q \Lambda Q^\top$ be the eigen-decomposition of a symmetric positive definite matrix 
$A \in \mathbb{R}^{n \times n}$.%, with 
% $\Lambda = \mathrm{diag}(\lambda_1,\dots,\lambda_n)$ and 
% $0 < \lambda_1 \le \cdots \le \lambda_n$. 
For any matrix $P$, set $E := P - A$ and define
\begin{equation*}
a_j := \sum_{i=1}^n \big|(Q^\top E Q)_{ij}\big|^2, \qquad j=1,\dots,n.
\end{equation*}
Then
\begin{equation*}
\|E\|_F^2 = \sum_{j=1}^n a_j,
\qquad
\|E A^{-1}\|_F^2 = \sum_{j=1}^n \frac{a_j}{\lambda_j^2}.
\end{equation*}
\end{lemma}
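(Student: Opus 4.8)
The plan is to reduce both identities to a single observation: the Frobenius norm is invariant under multiplication by orthogonal matrices, so I can transform everything into the eigenbasis of $A$ and then simply read off entries. Since $A$ is SPD, the eigenvector matrix $Q$ is orthogonal and $\Lambda = \mathrm{diag}(\lambda_1,\dots,\lambda_n)$ has strictly positive diagonal, so $\Lambda^{-1}$ exists and $A^{-1} = Q\Lambda^{-1}Q^\top$. Throughout I would write $\tilde E := Q^\top E Q$, whose squared column norms are by definition exactly the quantities $a_j = \sum_{i=1}^n |\tilde E_{ij}|^2$.

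First I would dispatch the unweighted identity. By orthogonal invariance, $\|E\|_F^2 = \|Q^\top E Q\|_F^2 = \|\tilde E\|_F^2$, and expanding the Frobenius norm as a double sum over entries and grouping by the column index $j$ gives $\|\tilde E\|_F^2 = \sum_{j=1}^n \sum_{i=1}^n |\tilde E_{ij}|^2 = \sum_{j=1}^n a_j$, which is the first claim.

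For the weighted identity I would substitute $A^{-1} = Q\Lambda^{-1}Q^\top$ to obtain $EA^{-1} = EQ\Lambda^{-1}Q^\top$, then peel off the orthogonal factors on each side: right-multiplication by $Q^\top$ and left-multiplication by $Q^\top$ both leave the Frobenius norm unchanged, so $\|EA^{-1}\|_F = \|Q^\top E Q\,\Lambda^{-1}\|_F = \|\tilde E \Lambda^{-1}\|_F$. The crucial bookkeeping step is that right-multiplication by the diagonal $\Lambda^{-1}$ scales the $j$-th column of $\tilde E$ by $\lambda_j^{-1}$, that is $(\tilde E\Lambda^{-1})_{ij} = \lambda_j^{-1}\tilde E_{ij}$; taking the squared Frobenius norm and again grouping by column yields $\sum_{j=1}^n \lambda_j^{-2}\sum_{i=1}^n|\tilde E_{ij}|^2 = \sum_{j=1}^n a_j/\lambda_j^2$, as required.

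There is no genuinely hard step here; the only place needing care is the alignment between the convention for $a_j$ — which sums over the row index $i$ at a fixed column $j$, hence is a \emph{column} norm — and the fact that right-multiplication by $\Lambda^{-1}$ acts on columns. Had $a_j$ been defined as a row sum, the weights $\lambda_j^{-2}$ would not factor out cleanly, so I would double-check the placement of $Q$ versus $Q^\top$ and confirm that the diagonal scaling indeed hits the summation index that defines $a_j$.
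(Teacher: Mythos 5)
Your proposal is correct and follows essentially the same route as the paper's proof: both use orthogonal invariance of the Frobenius norm to reduce to $\|Q^\top E Q\,\Lambda^{-1}\|_F^2$ and then observe that right-multiplication by $\Lambda^{-1}$ scales the $j$-th column, matching the column-sum definition of $a_j$. Your explicit remark about why $a_j$ must be a column (not row) sum is a nice sanity check but does not change the argument.
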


\begin{proof}
Since $A^{-1} = Q \Lambda^{-1} Q^\top$, we have
\begin{equation*}
E A^{-1} = Q (Q^\top E Q) \Lambda^{-1} Q^\top.
\end{equation*}
By invariance of the Frobenius norm under orthogonal transformations,
\begin{equation*}
\|E\|_F^2 = \|Q^\top E Q\|_F^2 = \sum_{j=1}^n \sum_{i=1}^n |(Q^\top E Q)_{ij}|^2 = \sum_{j=1}^n a_j
\end{equation*}
and
\begin{align*}
\|E A^{-1}\|_F^2 &= \|(Q^\top E Q)\Lambda^{-1}\|_F^2 \nonumber \\
&= \sum_{j=1}^n \sum_{i=1}^n \frac{|(Q^\top E Q)_{ij}|^2}{\lambda_j^2} = \sum_{j=1}^n \frac{a_j}{\lambda_j^2}
\end{align*}
\end{proof}

\begin{theorem}[Weighted functional inequality]
For $A$, $E$, and $a_j$ as in Lemma~\ref{lem:spectral-objectives}, assume that 
$\sum_{j=1}^n a_j = c > 0$. Then
\begin{equation*}
\|E A^{-1}\|_F^2 = \sum_{j=1}^n \frac{a_j}{\lambda_j^2} \;\;\ge\;\; \frac{c}{\lambda_n^2},
\end{equation*}
with equality if and only if $a_j = 0$ for all $j < n$ and $a_n = c$.
\end{theorem}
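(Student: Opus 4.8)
The plan is to reduce the statement to a simple term-by-term comparison exploiting the nonnegativity of the coefficients $a_j$ together with the extremal role of $\lambda_n$. First I would record two elementary facts. Each $a_j = \sum_{i=1}^n |(Q^\top E Q)_{ij}|^2$ is a sum of squared magnitudes, hence $a_j \ge 0$ for every $j$. Second, adopting the standing convention that the eigenvalues are ordered so that $\lambda_n$ is the largest, we have $0 < \lambda_j \le \lambda_n$ for all $j$ (positivity from $A$ being SPD), and squaring gives $\lambda_j^2 \le \lambda_n^2$, so that $1/\lambda_j^2 \ge 1/\lambda_n^2$.

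Combining these two observations, I would bound the weighted sum term by term: since $a_j \ge 0$ and $1/\lambda_j^2 \ge 1/\lambda_n^2$, it follows that $a_j/\lambda_j^2 \ge a_j/\lambda_n^2$ for each $j$. Summing over $j$ and invoking the hypothesis $\sum_{j=1}^n a_j = c$ yields
\begin{equation*}
\|E A^{-1}\|_F^2 = \sum_{j=1}^n \frac{a_j}{\lambda_j^2} \ge \frac{1}{\lambda_n^2}\sum_{j=1}^n a_j = \frac{c}{\lambda_n^2},
\end{equation*}
which is exactly the claimed inequality. This is the routine half of the argument.

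For the equality characterization I would examine when each term-wise inequality is tight. The bound $a_j/\lambda_j^2 \ge a_j/\lambda_n^2$ is an equality precisely when $a_j = 0$ or $\lambda_j = \lambda_n$. Assuming $\lambda_n$ is the strict maximum of the spectrum (so $\lambda_j < \lambda_n$ for every $j < n$), the only way for all terms to be tight is to have $a_j = 0$ for each $j < n$; the budget constraint $\sum_j a_j = c$ then forces $a_n = c$. Conversely, this configuration plainly attains $c/\lambda_n^2$, completing the ``if and only if.''

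The main obstacle, and the point I would flag explicitly, is the role of the \emph{strict} maximality of $\lambda_n$ in the equality statement. If the largest eigenvalue were repeated, the tightness condition $\lambda_j = \lambda_n$ could hold for several indices, and the minimizer would no longer be unique: any distribution of the budget $c$ over the eigenspace of $\lambda_n$ would achieve the bound. The stated characterization $a_j = 0$ for $j < n$ and $a_n = c$ is therefore implicitly predicated on $\lambda_n$ being a simple largest eigenvalue, consistent with the paper's ordering convention $\lambda_1 \le \cdots \le \lambda_n$; I would make this assumption explicit to keep the equality clause correct.
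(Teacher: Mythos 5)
Your proof is correct and takes essentially the same route as the paper's: both arguments reduce to the ordering $\lambda_1^{-2} \ge \cdots \ge \lambda_n^{-2}$ combined with the nonnegativity of the $a_j$, with you simply writing out the term-by-term comparison that the paper compresses into ``place all energy at the smallest weight.'' Your caveat about a repeated largest eigenvalue is well taken and is something the paper's own proof glosses over: the ``only if'' half of the equality clause genuinely requires $\lambda_{n-1} < \lambda_n$, since otherwise any distribution of the budget $c$ over the indices attaining $\lambda_n$ also achieves the bound.
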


\begin{proof}
Because $\lambda_1^{-2} \ge \cdots \ge \lambda_n^{-2}$ and the mapping 
$(a_1,\dots,a_n) \mapsto \sum_j a_j/\lambda_j^2$ is linear, 
the minimum under the constraint $\sum_j a_j = c$ is achieved by placing all energy 
at the smallest weight $\lambda_n^{-2}$. This proves the inequality and the 
corresponding condition for equality.
\end{proof}

\section{Computational experiments}

Recent advances in machine learning have explored neural network-based approaches for preconditioning, particularly Graph Neural Networks (GNNs) for constructing sparse factorized preconditioners~[3, 5]. However, recent studies~[4] have demonstrated fundamental limitations of message-passing GNNs in approximating non-local sparse triangular factorizations.

Our goal is to explore whether preconditioners can be improved by directly optimizing their sparse factors with gradient descent on different objectives. We investigate if this parameter-free approach can lead to better spectral properties for IC(0) factors with classical algorithm.

Our experiments aimed to demonstrate the effect on the spectral properties of the preconditioner for a single matrix. While this example is limited, many real-world applications require solving linear systems with a single coefficient matrix $A$ but multiple right-hand sides $\{ b_i\}_{i=1}^N$. This scenario is common in:

\begin{itemize}
\item Time-dependent PDEs, where the system matrix remains constant while the right-hand side evolves over time.
\item Parameter studies in optimization, where parameters change but the system structure remains fixed.
\item Multiple load cases in structural analysis, where different loading conditions are applied to the same structure.
\item Monte Carlo simulations, where random variations affect the right-hand side but not the system matrix.
\end{itemize}

Moreover, this approach operates globally on the entire preconditioner structure, which does not suffer from the principal limitations of GNNs for approximating sparse triangular factorizations.

To be more specific, we directly apply gradient updates of the losses to the entries of the IC(0) factors. We initialize training with IC(0) and update only the entries in its prescribed sparsity pattern. In practice, starting from IC(0) and using a small learning rate preserved a positive diagonal and yielded a usable preconditioner throughout.

The gradient of the loss function with respect to the factor entries $L_{ij}$ is computed using automatic differentiation. The update rule follows:
\begin{equation*}
L_{ij}^{(k+1)} = L_{ij}^{(k)} - \alpha \frac{\partial \mathcal{L}}{\partial L_{ij}}
\end{equation*}
where $\mathcal{L}$ is either loss~\eqref{eq:unweighted-frobenius} or loss~\eqref{eq:weighted-frobenius}, $\alpha$ is the learning rate and $k$ denotes the iteration number. Both losses are computed using Hutchinson's stochastic trace estimator~[2] as described in~[5].

We consider $4096\times 4096$ SPD matrix from a cell-centered second-order finite-volume discretization of
\begin{equation*}
-\nabla \cdot \big(k(x, y)\nabla u(x,y)\big) = f(x,y)
\end{equation*}
on a unit square with Dirichlet boundary conditions. The coefficient field $k(x,y)$ is the exponential of a Gaussian random field (global contrast $\approx10$). We draw
$10^3$ random right-hand sides from $\mathcal{N}(0,I)$. Other hyperparameters are: learning rate $10^{-3}$, batch of size $512$, and $10^4$ epochs.

We evaluate three characteristics: (i) the condition number $\kappa(P^{-1}A)$; (ii) CG iteration counts to a fixed relative residual tolerance; and (iii) eigenvalue histograms of $P^{-1}A$. These characteristics consistently indicate that optimizing IC(0) with the weighted objective~\eqref{eq:weighted-frobenius} shifts error away from small-eigenvalue modes, improving conditioning and CG convergence.

\begin{figure*}[ht]
    \vskip 0.2in
    \begin{center}
    \centerline{\includegraphics[width=1.\textwidth]{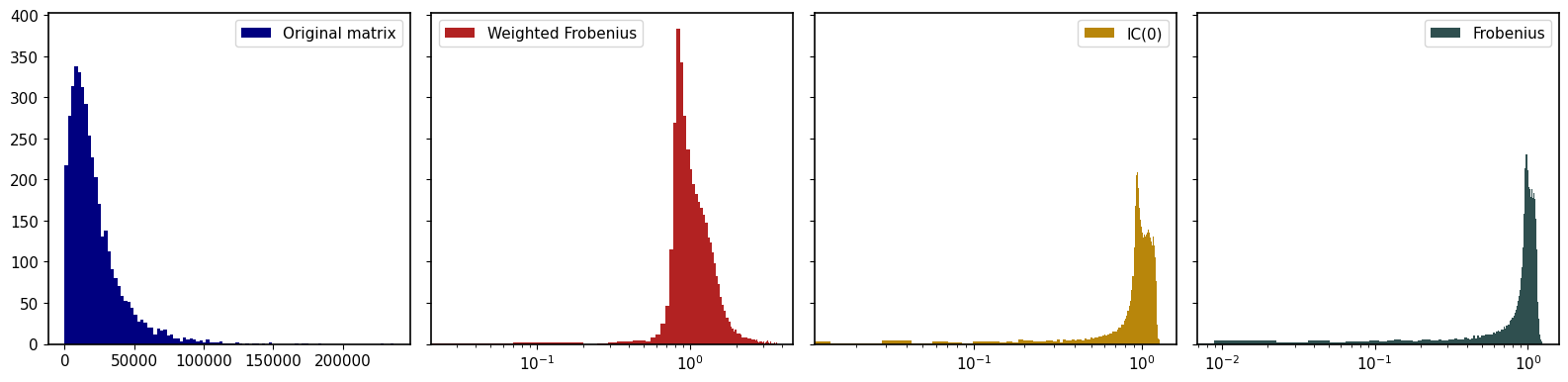}}
    \caption{Distributions of eigenvalues of the different matrices. From left to right: initial matrix $A$; preconditioned matrix $P^{-1}A$ with weighted objective~\eqref{eq:weighted-frobenius}; with IC(0); and with unweighted objective~\eqref{eq:unweighted-frobenius}.}
    \label{fig:eigenvalues_distribution}
    \end{center}
    \vskip -0.2in
\end{figure*}

\section{Discussion}

As shown below, weighting the loss by $A^{-1}$ yields a better conditioned $P^{-1}A$ than both IC(0) and the unweighted objective. Table~\ref{tab:condition_number} illustrates the condition number of the preconditioned system with different matrices. \textit{$A$} - initial system matrix. \textit{Weighted Frobenius} - initial matrix preconditioned with factors trained with weighted Frobenius objective~\eqref{eq:weighted-frobenius}; \textit{IC(0)} - initial matrix preconditioned with factors from IC(0); \textit{Frobenius} - initial matrix preconditioned with factors trained with unweighted Frobenius objective~\eqref{eq:unweighted-frobenius}. Numerically, $\kappa(P^{-1}A)$ drops from $11802$ (no preconditioner) to $232$ with IC(0), to $302$ with the unweighted objective, and to $130$ with the weighted objective, i.e., a $\sim1.8\times$ improvement over IC(0) and $\sim2.3\times$ over the unweighted variant.

\begin{table}[ht]
    \caption{Condition number $\kappa(P^{-1}A)$ for different preconditioners: IC(0) baseline, and factors optimized with the unweighted (1) vs. weighted (2) Frobenius objectives.}
    \label{tab:condition_number}
    \centering
    \begin{tabular}{cccc} 
        \toprule
        $A$ & Frobenius & Weighted Frobenius & IC(0) \\
        \midrule
        $11802$ & $302$ & $130$ & $232$ \\
        \bottomrule
    \end{tabular}
\end{table}

The Fig.~\ref{fig:eigenvalues_distribution} illustrates the eigenvalue distributions of the different matrices. Fig.~\ref{fig:cg_iterations} demonstrates CG method convergence with different preconditioners. Fig.~\ref{fig:loss_history} shows the normalized loss convergence during training. The weighted objective yields a tighter eigenvalue cluster for $P^{-1}A$ with fewer extreme small eigenvalues, explaining the lower condition number in Table~\ref{tab:condition_number}.

\begin{figure}[ht]
\vskip 0.2in
\begin{center}
    \centerline{\includegraphics[width=.8\columnwidth]{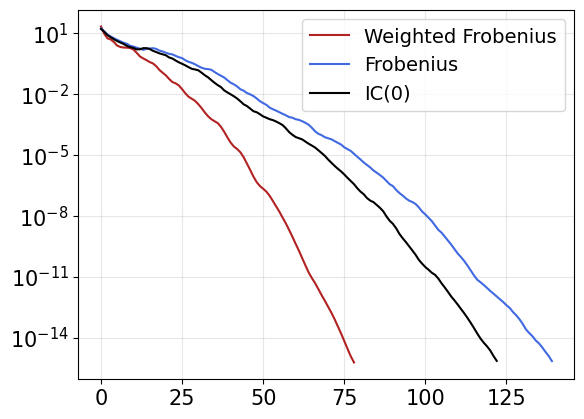}}
    \caption{CG convergence on a single linear system. Relative residual vs. iterations; same right-hand side for each method. \textit{Weighted Frobenius:} preconditioner optimized with~\eqref{eq:weighted-frobenius}; \textit{IC(0):} baseline incomplete Cholesky; \textit{Frobenius:} preconditioner optimized with~\eqref{eq:unweighted-frobenius}.}
    \label{fig:cg_iterations}
\end{center}
\vskip -0.2in
\end{figure}

\begin{figure}[ht]
    \vskip 0.2in
    \begin{center}
    \centerline{\includegraphics[width=.8\columnwidth]{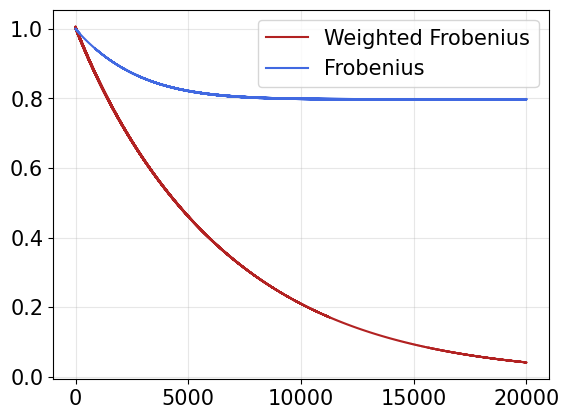}}
    \caption{Normalized training loss vs. update steps. Average over stochastic trace batches. \textit{Weighted Frobenius:} objective~\eqref{eq:weighted-frobenius}; \textit{Frobenius:} objective~\eqref{eq:unweighted-frobenius}.}
    \label{fig:loss_history}
    \end{center}
    \vskip -0.2in
\end{figure}

Theoretical analysis shows that the weighted Frobenius functional assigns larger penalties to error components aligned with small eigenvalues of $A$. In applications where $A$ arises from discretizations of elliptic PDEs, these directions often correspond to smooth or low-frequency modes. Thus, minimizing $\|(P - A)A^{-1}\|_F^2$ enforces stronger suppression of such components compared to the plain Frobenius functional. This effect is independent of how the approximation $P$ is parameterized. 
%We emphasize, however, that in unconstrained settings both functionals are trivially minimized by $P = A$, and additional considerations are needed to understand their impact on preconditioning and solver performance.

It is important to note the scope of these observations. The algebraic decomposition is method-agnostic: it describes how the two functionals weight different spectral components of the error, independently of the solver that will use the approximation. The implications for convergence depend on the choice of iterative method and on how the approximation is used. For CG on SPD systems, sensitivity to small eigenvalues makes the connection to our analysis direct. For GMRES or other Krylov subspace methods on nonsymmetric problems, convergence depends on the singular value distribution, the field of values, or the pseudospectrum of the preconditioned matrix. In such settings, suppressing components aligned with small singular values can be beneficial, but it does not constitute a universal guarantee of improved convergence. 

In summary, the weighted Frobenius functional provides a clear, method-independent description of how spectral components of the error are emphasized in structured approximations. Overall, a spectrally weighted Frobenius objective offers a method-agnostic lever to emphasize the right error modes. Its solver-level benefit is clearest for CG on SPD systems and remains problem-dependent elsewhere.

\end{document}